\theoremstyle{thmstyleone}%
\newtheorem{theorem}{Theorem}
\newtheorem{proposition}[theorem]{Proposition}%
\newtheorem{lemma}[theorem]{Lemma}%
\theoremstyle{thmstyletwo}%
\theoremstyle{thmstylethree}%
\begin{document}

\title[On the dynamics and integrability of the Ziegler pendulum]{On the dynamics and integrability of the Ziegler pendulum}


\author*[1,2,3,4]{\fnm{Ivan Yu.} \sur{Polekhin}}\email{ivanpolekhin@mi-ras.ru}

\affil*[1]{\orgname{Steklov Mathematical Institute of Russian Academy of Sciences}, \orgaddress{\street{8 Gubkina st.}, \city{Moscow}, \postcode{119991}, \country{Russia}}}

\affil[2]{\orgname{Moscow Institute of Physics and Technology}, \orgaddress{\street{9 Institutskiy per.}, \city{Dolgoprudny, Moscow Region}, \postcode{141701}, \country{Russia}}}

\affil[3]{\orgname{Lomonosov Moscow State University}, \orgaddress{\street{GSP-1, Leninskie Gory}, \city{Moscow}, \postcode{119991}, \country{Russia}}}

\affil[4]{\orgname{P.G. Demidov Yaroslavl State University}, \orgaddress{\street{14 Sovetskaya st.}, \city{Yaroslavl}, \postcode{150003}, \country{Russia}}}


\abstract{We prove that the Ziegler pendulum --- a double pendulum with a follower force --- can be integrable, provided that the stiffness of the elastic spring located at the pivot point of the pendulum is zero and there is no friction in the system. We show that the integrability of the system follows from the existence of two-parameter families of periodic solutions. We explain \textcolor{black}{a} mechanism for the transition from integrable dynamics, \textcolor{black}{for which there exist two first integrals and solutions belong to two-dimensional tori in a four-dimensional phase space}, to more complicated dynamics. The case in which the stiffnesses of both springs are non-zero is briefly studied numerically. We show that regular dynamics coexists with chaotic dynamics.}

\keywords{first integrals, non-potential force, system on a torus, non-integrability, periodic solutions}



\maketitle

\section{Introduction}\label{sec1}

The Ziegler pendulum is a well-known non-conservative mechanical system with many real-life applications. This system is mostly known for the `paradox' of loss of stability in the presence of friction, which was discovered 70 years ago \cite{ziegler1952stabilitatskriterien} (see also \cite{herrmann1964stability,bolotin1969effects}). A more detailed treatment of this and related systems and the history of the problem can be found in \cite{kirillov2013nonconservative}. A complete list of references on the Zielger pendulum and other non-conservative systems would be too long to give here and we also refer the reader to book \cite{kirillov2013nonconservative}. However, we would like to note that, to the best of our knowledge, the dynamics \textit{in the large} and integrability of the Ziegler pendulum have not been studied in the literature so far. At the same time, there are many papers on the dynamics of the classical double pendulum, where it is shown experimentally \cite{shinbrot1992chaos}, numerically \cite{stachowiak2006numerical}, and analytically \cite{dullin1994melnikov} using the Poincar{\'e}-Melnikov method that this system is chaotic. Early non-trivial results on the non-integrability of systems in non-potential force fields have been obtained in \cite{kozlov2022integrability}. The problem of integrability of the Ziegler pendulum has been also formulated in \cite{kozlov2022integrability}.

In this paper we study the dynamics of a planar double pendulum in the presence of a follower force of constant magnitude. We assume that this follower force is acting along a rod that is not connected to the fixed pivot point. There are no gravity and friction forces in the system, yet there are two springs of linear stiffness located in the joints of the double pendulum.

The paper presents both analytical and numerical results. Employing an analytical approach we study the case in which  the stiffness of the spring located at the pivot point is zero. Since we have the rotational symmetry, we can consider the reduced system of the third order. We prove that in some cases (for instance, for small follower forces) in the reduced system there exists a \textcolor{black}{two-parameter} family of periodic solutions. It means that in some region of the phase space of the system there exist two first integrals of the system and one can consider this system to have regular dynamics, at least for some initial conditions. However, these periodic solutions do not cover the whole phase space and in some parts of the phase space the system may not be integrable. We explain the mechanism of loss of periodicity of solutions associated with the change of parameters or initial conditions and show how the birth of non-periodic solutions leads to the non-integrability of the system.

The case in which the stiffnesses of the springs are non-zero is studied numerically. In this case, we can also observe the coexistence of two types of solutions in the phase space: regular solutions, which lie on two-dimensional surfaces, and chaotic trajectories. We show this based on the numerically obtained Poincar{\'e} sections and Lyapunov characteristic exponents of some solutions of the system.

\section{Equations of motion}\label{sec2}

Let us consider a planar double mathematical pendulum. One of its absolutely rigid massless rod of length $l_2$ can rotate freely in the plane about a fixed point $O$. Two other rigid massless rods of lengths $l_1$ and $l_3$ are connected to the free end $A$ of the first rod $OA$. Let us denote the ends of these rods by $B$ and $C$, respectively (Fig.~\ref{fig1}). We assume that points $A$, $B$, and $C$ always remain on a straight line {during the motion of the pendulum.} There is a point mass $m_2$ at point $A$ and point masses $m_1$ and $m_3$ at points $B$ and $C$, respectively. There is a force $F$ of constant magnitude acting along segment $BC$. Two springs of stiffnesses $k_1$ and $k_2$ are located at points $A$ and $O$, respectively. 

\begin{figure*}[t]
   \centering
   \includegraphics[width=\linewidth]{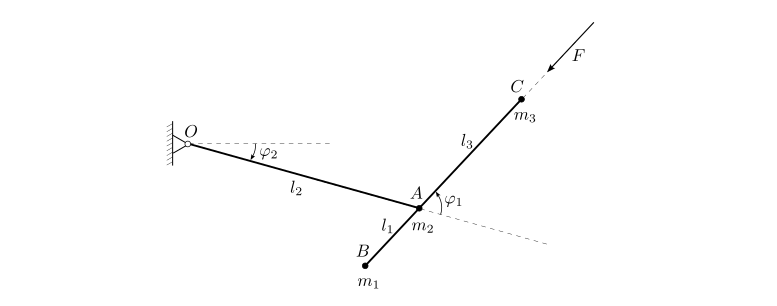}
   \caption{The Ziegler pendulum.}
   \label{fig1}
\end{figure*}

Let $\varphi_2$ be a generalized coordinate, the angle between a constant direction in the plane and segment $OA$, and $\varphi_1$ be the angle between $OA$ and $BC$. The kinetic energy of the system can be expressed in these coordinates as follows:
\begin{align}
\label{eq1}
\begin{split}
&T = \frac{m_2}{2} \dot\varphi_2^2 l_2^2 + \\
&\frac{m_1}{2} \left( \dot \varphi_2^2 l_2^2 + (\dot \varphi_1 + \dot \varphi_2)^2 l_1^2 + 2 l_1 l_2 \dot \varphi_2(\dot \varphi_1 + \dot \varphi_2) \cos \varphi_1 \right) +\\
& \frac{m_3}{2}\left( \dot\varphi_2^2 l_2^2 + (\dot\varphi_1 + \dot\varphi_2)^2 l_3^2 - 2l_2 l_3 \dot \varphi_2 (\dot \varphi_1 + \dot \varphi_2) \cos \varphi_1 \right).
\end{split}
\end{align} 
The potential energy is
\begin{align}
\label{eq2}
\begin{split}
\Pi = \frac{k_1 \varphi_1^2}{2} + \frac{k_2 \varphi_2^2}{2}.
\end{split}
\end{align}
The components of generalized force are
\begin{align}
\label{eq3}
\begin{split}
Q_1 = 0, \quad Q_2 = -Fl_2 \sin \varphi_1.
\end{split}
\end{align}
From \eqref{eq1}, \eqref{eq2}, and \eqref{eq3} we obtain the Lagrange equations, which can be represented as
\begin{align}
\label{eq5}
\begin{split}
& A_{11} \ddot \varphi_1 + A_{12} \ddot \varphi_2 = r_1,\\
& A_{21} \ddot \varphi_1 + A_{22} \ddot \varphi_2 = r_2,
\end{split}
\end{align}
where
\begin{align*}
\begin{split}
A_{11} & = m_1l_1^2 + m_3l_3^2,\\
A_{12} & = A_{21} = m_1 l_1^2 + m_3 l_3^2 + m_1 l_1 l_2 \cos \varphi_1\\
& - m_3 l_2 l_3 \cos\varphi_1,\\
A_{22} & = m_1 l_2^2 + m_1 l_1^2 + m_2 l_2^2 + m_3 l_2^2 + m_3 l_3^2\\
& + 2 m_1 l_1 l_2 \cos \varphi_1 - 2 m_3 l_2 l_3 \cos \varphi_1,\\
r_1 & = - k_1 \varphi_1 - m_1 l_1 l_2 \dot\varphi_2^2 \sin \varphi_1 + m_3 l_2 l_3 \dot \varphi_2^2 \sin \varphi_1,\\
r_2 & = -F l_2 \sin \varphi_1 - k_2 \varphi_2 + m_1 l_1 l_2 \dot \varphi_1 (\dot \varphi_1 + 2 \dot \varphi_2) \sin \varphi_1\\
& - m_3 l_2 l_3 \dot \varphi_1 (\dot \varphi_1 + 2 \dot \varphi_2) \sin \varphi_1.
\end{split}
\end{align*}
If $k_2 = 0$, then the right-hand side does not depend on {the} variable $\varphi_2$ and system \eqref{eq5} reduces to the three-dimensional system with the state variables $\varphi_1$, $v_1=\dot{\varphi}_1$, and $v_2=\dot{\varphi}_2$:
\begin{align}
\label{eq4}
\begin{split}
& \dot \varphi_1 = v_1,\\
& \dot v_1 = \left( r_1 - \frac{A_{12}}{A_{22}} r_2 \right)\left( A_{11} - \frac{A_{12} A_{21}}{A_{22}} \right)^{-1},\\
& \dot v_2 = \left( r_2 - \frac{A_{21}}{A_{11}} r_1 \right)\left( A_{22} - \frac{A_{12} A_{21}}{A_{11}} \right)^{-1}.
\end{split}
\end{align}

\begin{lemma}
{System \eqref{eq5} has a smooth invariant measure.}
\end{lemma}
\begin{proof}
{The components of generalized force \eqref{eq3} do not depend on the generalized velocities $\dot \varphi_1$ and $\dot \varphi_2$. Therefore, the Lagrange equations have the following form:
$$
\frac{d}{dt}\frac{\partial L}{\partial \dot \varphi_i} - \frac{\partial L}{\partial \varphi_i} = Q_i(\varphi_1, \varphi_2), \quad i = 1,2.
$$
After the Legendre transformation, $H(p_1, p_2, \varphi_1, \varphi_2) = \sum\limits_{i=1}^2 p_i \dot q_i - L(\dot \varphi_1, \dot \varphi_2, \varphi_1, \varphi_2)$, where $p_i = \frac{\partial L}{\partial \dot \varphi_i}$, $i=1,2$, this system can be rewritten as
$$
\dot \varphi_i = \frac{\partial H}{\partial p_i}, \quad \dot p_i = -\frac{\partial H}{\partial \varphi_i} + Q_i(\varphi_1, \varphi_2), \quad i=1,2.
$$
Note that the change of variables $(\dot \varphi_1, \dot \varphi_2, \varphi_1, \varphi_2) \mapsto (p_1, p_2, \varphi_1, \varphi_2)$ is well defined since the quadratic form of the kinetic energy is positive definite. From Liouville's theorem on the conservation of the phase volume we have that $d\mu = dp_1 \wedge dp_2 \wedge d\varphi_1 \wedge d\varphi_2$ is a smooth invariant measure. This follows from the fact that the functions $Q_i$ depend only on the coordinates and the divergence of the right hand side of the above \textcolor{black}{system} for the variables $\dot \varphi_i$, $\dot p_i$ is zero. Therefore, system \eqref{eq5} has the invariant measure $d \mu = dp_1 (\varphi_1, \varphi_2, \dot \varphi_1, \dot \varphi_2) \wedge dp_2 (\varphi_1, \varphi_2, \dot \varphi_1, \dot \varphi_2) \wedge d\varphi_1 \wedge d\varphi_2$.} 

\end{proof}

{Let us give more precise definitions of integrability, which can and will be used for system \eqref{eq5}:
\begin{enumerate}
\item If the system is Hamiltonian (for instance, for $F = 0$), then it is natural to call the system integrable if the Liouville-Arnold theorem (see, for instance \cite{arnold2007mathematical}) holds: the system is integrable if there exist two independent first integrals in involution.
\item If the system is not Hamiltonian we can study the integrability in the sense of the Jacobi theorem of the last multiplier (see \cite{jacobi1884cgj} or a modern exposition of this classical theorem in \cite{kozlov2013euler}): the system is integrable if there exist two independent first integrals and a smooth invariant measure. We can also consider the integrability \textit{in a broad sense.} This notion was introduced by O. Bogoyavlenskij in \cite{bogoyavlenskij1998extended}. In our case this type of integrability means that the system has two first integrals and a two-dimensional Lie algebra of symmetries of \eqref{eq5} which preserves the first integrals. In our considerations this Lie algebra will be defined by two commuting vector fields: the trivial symmetry defined by the vector field of the system and the vector field $\frac{\partial}{\partial \varphi_2}$ which corresponds to the rotational symmetry of the system.
\end{enumerate}} 

\textcolor{black}{Let us briefly compare two definitions of integrability: the integrability in a broad sense and in the sense of the Jacobi theorem. The key difference here is that \textcolor{black}{if a system is integrable in a broad sense then the phase space is foliated into invariant tori} (if the level sets of the first integrals are compact). The proof of this fact is based on the existence of only one closed $k$-dimensional manifold which admits a $k$ commuting vector fields (this manifold is $\mathbb{T}^k$). Therefore, integrability in a broad sense is similar to  the integrability of Hamiltonian systems in the sense of the Arnold-Liouville. For instance, from the integrability in a broad it also follows that there exist action-angle variables in a neighborhood of an invariant torus. From the Jacobi theorem we do not obtain any information about the \textcolor{black}{dynamics} in the large of a considered dynamical system. We can only conclude that the corresponding ODE can be integrated by quadratures.}  

System \eqref{eq4} is not Hamiltonian. Therefore, the integrability of this system can be studied in the sense of the Jacobi theorem or \textit{in a broad sense.} However, the most natural way to define the integrability here is to apply the classical theorem of integrability for general systems of ordinary differential equations: a system in $\mathbb{R}^n$ is integrable if it has $n-1$ functionally independent first integrals. \textcolor{black}{In this case the system is also integrable in a broad sense.}

Note that a generalization of the classical Jacobi theorem presented in \cite{kozlov2013euler} \textcolor{black}{can also be} considered as a definition of integrability. However, in the paper, we will not use this general result.

\section{Two integrable cases: $k_2 = 0$ and $F = 0$ or $m_1 l_1 = m_3 l_3$}

We start our analysis of the integrability of systems \eqref{eq5} and \eqref{eq4} by considering two special integrable cases. Namely, the case where $F = 0$ and the case {in which} $m_1 l_1 = m_3 l_3$. In both cases we assume that $k_2 = 0$.

\begin{proposition}
{If $F = 0$ and $k_2 = 0$, then system \eqref{eq5} is Hamiltonian and integrable in the sense of the Liouville-Arnold theorem.}
\end{proposition}
\begin{proof}
{
If there is no follower force ($F = 0$), then the system is Hamiltonian. From $k_2 = 0$ we obtain that $\varphi_2$ is a cyclic variable. There exist two first integrals: the energy integral $H = T + \Pi$ and the cyclic integral (the integral of angular momentum)
\begin{align}
\label{eq6}
\begin{split}
&K = \frac{\partial T}{\partial \dot \varphi_2} =  m_2 \dot \varphi_2 l_2^2 +\\
&\frac{m_1}{2} \left( 2 \dot \varphi_2 l_2^2 + 2(\dot \varphi_1 + \dot \varphi_2)l_1^2 + 2l_1 l_2 \cos \varphi_1 (\dot \varphi_1 + 2 \dot \varphi_2) \right) + \\
& \frac{m_3}{2} \left( 2 \dot \varphi_2 l_2^2 + 2(\dot \varphi_1 + \dot \varphi_2) l_3^2 - 2l_2 l_3 (\dot \varphi_1 + 2 \dot \varphi_2) \cos \varphi_1 \right).
\end{split}
\end{align}
It can be easily shown that $K$ and $H$ are functionally independent and their Poisson bracket equals zero almost everywhere.}
\end{proof}

Note that system \eqref{eq4} is also integrable as a general system of ordinary differential equations (system \eqref{eq4} defined in a three-dimensional space has two first integrals), and in the sense of the Jacobi theorem of the last multiplier. The latter follows from the fact that the original system \eqref{eq5} has an invariant measure for any $F$. The density of this measure is constant if we rewrite our system in the Hamiltonian coordinates (see Lemma 1).

Another case {in which} system \eqref{eq4} is simplified and can be integrated \textcolor{black}{by quadratures} is when $m_1 l_1 = m_3 l_3$.

\begin{proposition}
{
If $m_1 l_1 = m_3 l_3$ and $k_2 = 0$, then system \eqref{eq4} is integrable in the sense of the Jacobi theorem.}
\end{proposition}
\begin{proof}
{
The system can be presented {as two} separate independent systems:
\begin{align}
\label{eq7}
\begin{split}
& \ddot \varphi_1 = \frac{F l_1 \sin \varphi_1 - k_1 \varphi_1}{(m_1 + m_2 + m_3)l_2^2},\\
& \dot v_2 = - \ddot \varphi_1 - \frac{k_1}{m_1 l_1^2 + m_3 l_3^2} \varphi_1.
\end{split}
\end{align}
The first subsystem has the invariant measure $d\mu = d\varphi_1 \wedge d v_1$, where $\dot \varphi_1 = v_1$. The proof is similar to that of Lemma 1. Therefore, system \eqref{eq7} also has the invariant measure $\rho(\varphi_1, v_1) d\varphi_1 \wedge d v_1 \wedge d v_2$. The first subsystem also has a first integral and \eqref{eq7} is integrable by the Jacobi theorem.}
\end{proof}

We have shown that systems \eqref{eq5} and \eqref{eq4} are integrable when $k_2 = 0$ and $F = 0$ or $m_1 l_1 = m_3 l_3$. These integrable cases can be explicitly studied and we prove the existence of periodic solutions that intersect the plane $\varphi_1 = 0$ at two points. The projections of the corresponding solutions onto the plane $(\varphi_1, v_1)$ are shown in Figs.~2 and 3. 

\begin{figure}[h!]
  \centering
  \includegraphics[width=\linewidth]{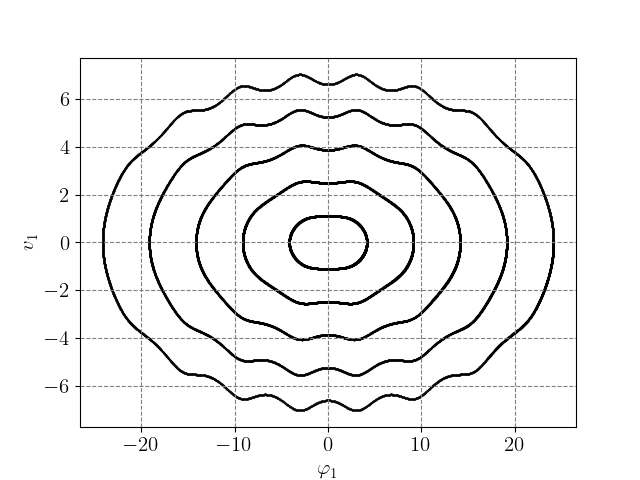}
  \label{fig:sfig2}
\caption{Periodic solutions of system \eqref{eq4} intersecting the plane $\varphi_1 = 0$ at two points for the case {in which } $F = 0$. Here $m_1 = 1$, $m_2 = 2$, $l_1 = 3$, $l_2 = 1$, $m_3 = 1$, $l_3 = 4$, $k_1 = 0.275$, $\varphi_1(0) = \pi + 1 + 5j$, $j=0,\dots, 4$, $v_1(0) = 0.1$, $v_2(0) = 0.2$.}
\end{figure}

\begin{figure}[h!]
  \centering
  \includegraphics[width=\linewidth]{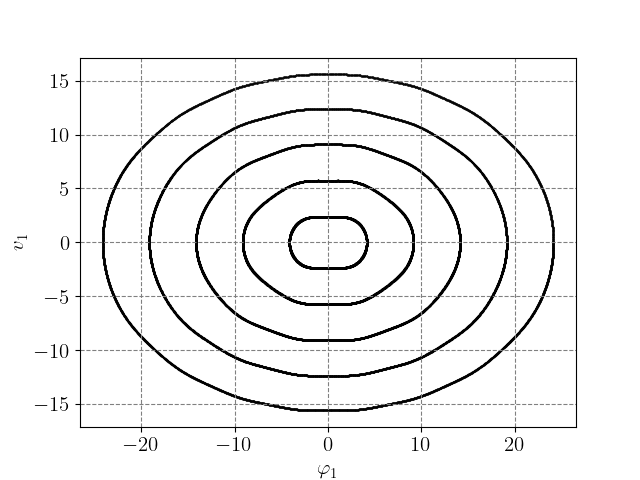}
  \label{fig:sfig1}
\caption{{Periodic solutions of system \eqref{eq4} intersecting the plane $\varphi_1 = 0$ at two points for the case in which $m_1 l_1 = m_3 l_3$. Here $F=2$, $m_1 = 1$, $m_2 = 1$, $l_1 = 2$, $l_2 = 1$, $m_3 = 2$, $l_3 = 1$, $k_1 = 1$, $\varphi_1(0) = \pi + 1 + 5j$, $j=0,\dots, 4$, $v_1(0) = 0.1$, $v_2(0) = 0.2$.}}
\end{figure}

\begin{proposition}
If $F = 0$ and $k_2 = 0$, then there exists a two-parameter family of periodic solutions of system \eqref{eq4} which intersect the plane $\varphi_1 = 0$ transversely, provided that $k_1$ is sufficiently large.
\end{proposition}
\begin{proof}
The system has two first integrals and the main idea of the proof is to show that the level sets of these integrals intersect the plane $\varphi_1 = 0$.

First, let us find an expression for $\dot\varphi_2$ from \eqref{eq6}:
$$
\dot \varphi_2 = \frac{K - V(\varphi_1) \dot \varphi_1}{U(\varphi_1)},
$$
where
\begin{align*}
\begin{split}
V(\varphi_1) &= m_1 l_1^2 + m_3 l_3^2 + (m_1 l_1 l_2 - m_3 l_2 l_3 )\cos \varphi_1,\\
U(\varphi_1) &= m_1 l_1^2 + m_2 l_2^2 + m_3 l_2^2 + m_3 l_3^2 + m_3 l_3^2\\
 &+ 2 m_1 l_1 l_2 \cos\varphi_1 - 2 m_3 l_2 l_3 \cos \varphi_1.
\end{split}
\end{align*}
The system is Hamiltonian, we can substitute the above expression for $\dot\varphi_2$ into the energy integral $H$ and obtain some function of two variables $\varphi_1$, $\dot \varphi_1$:
\begin{align*}
\begin{split}
H(\varphi_1, \dot \varphi_1) &= \frac{1}{2} l_2^2 (m_1 + m_2 + m_3) \left(\frac{K - V \dot \varphi_1}{U} \right)^2 \\
&+ \frac{1}{2} (m_1 l_1^2 + m_3 l_3^2) \left( \frac{K + (U - V)\dot \varphi_1}{U} \right)^2\\
& + \left(\frac{K - V \dot \varphi_1}{U} \right)\left( \frac{K + (U - V)\dot \varphi_1}{U} \right) \\ 
&\times  \cos \varphi_1 (m_1 l_1 l_2 - m_3 l_2 l_3) + \frac{k_1 \varphi_1^2}{2}.
\end{split}
\end{align*}

Let us prove that the function $H(\varphi_1, \dot \varphi_1)$ has an extremum on the axis $\varphi_1 = 0$. Since $U'(0) = V'(0) = 0$, for $\varphi_1 = 0$ we have
$$
\frac{\partial H}{\partial \varphi_1} = 0.
$$
From the equation
$$
\frac{\partial H}{\partial \dot\varphi_1} = 0
$$
we obtain

\begin{align*}
\begin{split}
&\dot \varphi_1 = (VK l_2^2 (m_1 + m_2 + m_3) + K(V-U)(m_1 l_1^2 \\
&+ m_3 l_3^2) + (2V - U)K (m_1 l_1 l_2 - m_3 l_2 l_3) )\\
&\times(V^2 l_2^2 (m_1 + m_2 + m_3) + (U-V)^2 (m_1 l_1^2 + m_3 l_3^2)\\
& - 2V (U-V) (m_1 l_1 l_2 - m_3 l_2 l_3))^{-1}
\end{split}
\end{align*}

We assume that $m_i > 0$, $l_i > 0$, $i = 1,2,3$. Therefore, the denominator of this fraction is always strictly positive since $(V,U) \neq (0,0)$ and it is a quadratic form of $V$ and $U$ such that the determinant of the corresponding symmetric matrix is given by 
\begin{align*}
\begin{split}
\Delta & = l_2^2(l_1^2 m_1 m_2 + l_1^2 m_1 m_3\\
& + 2l_1 l_3 m_1 m_3 + l_3^2 m_1 m_3 + l_3^2 m_2 m_3) > 0.
\end{split}
\end{align*}

Let us now show that the extremum considered is a strict local minimum. Indeed, it is not hard to see that for $\varphi_1 = 0$
$$
\frac{\partial ^2 H}{\partial \varphi_1 \partial \dot \varphi_1} = 0.
$$ 
The first minor of the Hessian matrix at the point where the gradient of $H$ equals zero has the following form:
$$
\frac{\partial ^2 H}{\partial \varphi_1^2} = c + k_1,
$$
where $c$ is a constant. If $k_1 > 0$ is large, then this value is positive. Since 
\textcolor{black}{
\begin{align*}
\begin{split}
&\frac{\partial ^2 H}{\partial \dot \varphi_1^2} = \frac{V(U-V)}{U^2} \cos \varphi_1 (m_1 l_1 l_2 - m_3 l_2 l_3) +\\
 & \frac{V^2}{U^2} l_2^2 (m_1 + m_2 + m_3) + (m_1 l_1^2 + m_3 l_3^2) \frac{(U-V)^2}{U^2}
\end{split}
\end{align*}}
 is strictly positive for all $U$ and $V$, the Hessian is positive definite.

Finally, we show that, for all values of the energy integral which is close to the critical value of the function $H$ at the point of local minimum, the trajectory is periodic and intersects the plane $\varphi_1 = 0$ transversely. If one trajectory intersects the plane $\varphi_1 = 0$ transversely and $H = h$, $K = k$ along this trajectory, then for all $K = \tilde k$, $H = \tilde h$ the corresponding periodic trajectory also intersect $\varphi_1 = 0$ at two points (if $\tilde k$ is close to $k$ and $\tilde h$ is close to $h$). This follows from the continuous dependence of the solutions from the initial conditions. 

\end{proof}

Similar propositions hold for system \eqref{eq4} if $m_1 l_1 = m_3 l_3$.

\begin{proposition}
For $k_1 > 0$ system \eqref{eq7} has a two-parameter family of periodic solutions which intersect the plane $\varphi_1 = 0$ transversely.
\end{proposition}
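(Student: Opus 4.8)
The plan is to exploit the explicit integrability of system \eqref{eq7} and show that, for a range of energy levels of the first equation, the corresponding solutions are periodic in $\varphi_1$, and that this periodicity lifts to periodicity of the full third-order system via a resonance / averaging argument controlled by the two free parameters (the energy $E$ of the first equation and the initial value $v_2(0)$).

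First I would analyze the scalar equation $\ddot\varphi_1 = (Fl_1\sin\varphi_1 - k_1\varphi_1)/((m_1+m_2+m_3)l_2^2)$. Its potential is $W(\varphi_1) = (k_1\varphi_1^2/2 + Fl_1\cos\varphi_1)/((m_1+m_2+m_3)l_2^2)$ (up to a constant); for $k_1 > 0$ this potential is a proper function with a nondegenerate minimum near $\varphi_1 = 0$ (since the quadratic term dominates the bounded term $Fl_1\cos\varphi_1$ for the second derivative one needs $k_1 > Fl_1$, or one simply works near the minimum where the linearization has positive stiffness). Hence for every energy value $E$ slightly above the minimum, the orbit $\{(\varphi_1,v_1) : \tfrac12 v_1^2 + W(\varphi_1) = E\}$ is a closed curve surrounding the equilibrium, giving a periodic solution $\varphi_1(t)$ of some period $\tau(E)$, and this orbit crosses $\varphi_1 = 0$ transversely in exactly two points (the turning points are at $v_1 = 0$, $\varphi_1 \neq 0$, so $\varphi_1 = 0$ is not a turning point).

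Next I would handle the second component. From the second line of \eqref{eq7}, $v_2(t) = v_2(0) - \bigl(\dot\varphi_1(t) - \dot\varphi_1(0)\bigr) - \frac{k_1}{m_1 l_1^2 + m_3 l_3^2}\int_0^t \varphi_1(s)\,ds$. The term $\dot\varphi_1(t)$ is automatically $\tau(E)$-periodic, so the only obstruction to $v_2$ being periodic is the integral $\int_0^t \varphi_1(s)\,ds$: it is periodic (bounded) iff the mean value $\bar\varphi_1(E) := \frac{1}{\tau(E)}\int_0^{\tau(E)}\varphi_1(s)\,ds$ vanishes. But the potential $W$ is an even function of $\varphi_1$ (both $\varphi_1^2$ and $\cos\varphi_1$ are even), so every closed orbit is symmetric under $\varphi_1 \mapsto -\varphi_1$, and therefore $\bar\varphi_1(E) = 0$ for every admissible $E$. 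Consequently $v_2(t)$ — and hence, after one more quadrature, $\varphi_2(t)$ up to a linear drift — is $\tau(E)$-periodic. The drift of $\varphi_2$ itself is irrelevant for periodicity of \eqref{eq4}, which only involves $\varphi_1, v_1, v_2$; so the solution of \eqref{eq4} is genuinely periodic. Letting $E$ range over an interval above the minimum and $v_2(0)$ over $\mathbb{R}$ (the latter being a true second parameter since $v_2$ enters \eqref{eq4} as a coordinate and is shifted by $v_2(0)$) produces the claimed two-parameter family, all members crossing $\varphi_1 = 0$ transversely.

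The main obstacle — and the only place real care is needed — is the vanishing of the mean $\bar\varphi_1(E)$; everything else is the standard phase-portrait picture of a one-degree-of-freedom system. I would make the evenness argument precise by parametrizing the closed orbit symmetrically in time about a turning point, so that $\varphi_1(t_0 + s) = \varphi_1(t_0 - s)$ is not what one wants (that gives evenness, not odd-mean) — rather one uses that the orbit $\{(\varphi_1,v_1)\}$ in the plane is invariant under $(\varphi_1,v_1)\mapsto(-\varphi_1,-v_1)$ (time reversal composed with the reflection), which pairs each point of the orbit with its antipode and forces the time-average of $\varphi_1$ over a full period to cancel. A second minor point to check is the transversality and the nondegeneracy of the family: one should note $\tau(E)$ is smooth in $E$ and the crossing of $\varphi_1=0$ is transverse because $v_1 \neq 0$ there, which I would record explicitly. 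With $k_1 > 0$ fixed, restricting $E$ to a small enough interval around the minimum guarantees the orbit stays in the region where $W$ is convex and the picture is valid even without assuming $k_1 > Fl_1$ globally.
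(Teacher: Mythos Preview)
Your argument is essentially the paper's: exploit the evenness of the potential $W(\varphi_1)=(k_1\varphi_1^2/2+Fl_1\cos\varphi_1)/((m_1+m_2+m_3)l_2^2)$ to get closed orbits symmetric under $(\varphi_1,v_1)\mapsto(-\varphi_1,-v_1)$, hence $\overline{\varphi_1}=0$ over a period, hence the integral term in the formula for $v_2$ is periodic and $v_2$ inherits the period of $\varphi_1$. The two parameters are the energy $E$ and $v_2(0)$, exactly as you say.

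The one slip is the energy regime. You work with $E$ slightly above the minimum of $W$, but for $0<k_1<Fl_1$ (with $F>0$) the origin is a local \emph{maximum} of $W$ (since $W''(0)\propto k_1-Fl_1$); the minima then sit at symmetric points $\pm\varphi_1^\ast\ne0$, and small orbits near either of them do \emph{not} meet the plane $\varphi_1=0$ at all, so neither the transversality claim nor the crossing claim holds there. Your closing sentence does not repair this. The paper's remedy is to go in the opposite direction: because $k_1>0$ makes $W$ proper, for all \emph{sufficiently large} energies the level set $\{\tfrac12 v_1^2+W(\varphi_1)=E\}$ is a single closed curve encircling the origin and symmetric with respect to both coordinate axes, hence crossing $\varphi_1=0$ transversely in exactly two points regardless of the sign of $k_1-Fl_1$. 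With that change of regime your argument (zero mean of $\varphi_1$, periodicity of $v_2$, two free parameters) goes through verbatim.
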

\begin{proof}
The first equation of system \eqref{eq7} is the Lagrange equation with the following potential energy
$$
\Pi = \frac{F l_1 \cos \varphi_1 + \frac{1}{2}k_1 \varphi_1^2}{(m_1 + m_2 + m_3)l_2^2}.
$$
If the value of total energy is sufficiently large, then the phase trajectories encircle the point of origin $\varphi_1 = 0$, $\dot \varphi_1 = 0$ and these trajectories are symmetric with respect to the coordinate axes. Therefore, $\varphi_1(t)$ is a periodic function and its mean value equals zero. We find that $v_2(t)$ is also a periodic function with the same period. From the continuous dependence on the initial data from the existence of one periodic trajectory we obtain a two-parameter family of such trajectories.
\end{proof}

\begin{proposition}
For $k_1 = 0$ and $F < 0$ system \eqref{eq7} {has} a two-parameter family of periodic solutions which intersect the plane $\varphi_1 = 0$ transversely.
\end{proposition}
\begin{proof}
For the first equation of the system, the point $\varphi_1 = 0$, $\dot \varphi_1 = 0$ is a local minimum. Since the first equation of system \eqref{eq7} is a Hamiltonian system, this equilibrium is of center type, i.e., all trajectories which are close to this equilibrium are periodic. Again, the function $v_2(t)$ has the same period as the corresponding periodic solution 
\end{proof}

The periodic solutions obtained in Propositions 4--6 are orbitally stable, but their periods are different and these solutions are Lyapunov unstable.

Note that the Ziegler pendulum can be considered as a Hamiltonian system for some values of the parameters \cite{kozlov2021integrals}. However, for the case $k_2 = 0$ it follows from the conditions presented in \cite{kozlov2021integrals} that $m_1 l_1 = m_3 l_3$ and $k_1 = 0$. Therefore, from \eqref{eq5} we obtain a trivial system, which is less general than \eqref{eq7}.

\section{Integrability}

In the proofs of Propositions 4--6 we used the following fact: if there is a periodic trajectory which intersects the plane $\varphi_1 = 0$ transversely, then there exists a two-parameter family of such {solutions}. We used the continuous dependence on the initial data, but the same holds if we slightly vary the right-hand sides of the corresponding equations. In other words, there are two-parameter families of periodic solutions in system \eqref{eq4} if $k_2 = 0$ and $|F|$ or $|m_1 l_1 - m_3 l_3|$ are sufficiently small. This fact plays a key role in our proof of the integrability.

For the case where $k_2 = 0$ it is natural to call the original system integrable if the reduced system \eqref{eq4} has two first integrals. \textcolor{black}{In particular, in this case system \eqref{eq5} is integrable in a broad sense.} In the general case, for which it is also allowed that $k_2 \ne 0$, it is also natural to call the system with two first integrals integrable. For any $k_2$ we have a system defined on $\mathbb{T}^2 \times \mathbb{R}^2$ and this system has an invariant measure. Therefore, this system is integrable if it has two first integrals, similarly to Hamiltonian systems with two degrees of freedom and to the case $k_2 = 0$.

Let us prove that system \eqref{eq4} (where $k_2 = 0$) remains integrable for $|F|$ or $|m_1 l_1 - m_3 l_3|$ sufficiently small. The key step in the proof is given by the following lemma.

\begin{lemma}
Let the functions $\varphi_1(t)$, $v_1(t)$, $v_2(t)$ be a solution of system \eqref{eq4}. Then the functions $-\varphi_1(-t)$, $v_1(-t)$, $v_2(-t)$ are also a solution.
\end{lemma}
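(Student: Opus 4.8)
The plan is to verify directly that the time-reversal substitution $t \mapsto -t$, combined with the sign flip $\varphi_1 \mapsto -\varphi_1$ (and $v_1 \mapsto v_1$, $v_2 \mapsto v_2$), maps solutions of \eqref{eq4} to solutions. First I would record how the coordinates and velocities transform. If we set $\tilde\varphi_1(t) = -\varphi_1(-t)$, $\tilde v_1(t) = v_1(-t)$, $\tilde v_2(t) = v_2(-t)$, then differentiating gives $\dot{\tilde\varphi}_1(t) = \dot\varphi_1(-t) = v_1(-t) = \tilde v_1(t)$, so the first equation of \eqref{eq4} is satisfied automatically. For the two second-order equations I would need $\dot{\tilde v}_i(t) = -\dot v_i(-t)$, so the claim reduces to checking that the right-hand sides of the $\dot v_1$ and $\dot v_2$ equations are \emph{odd} under the substitution $(\varphi_1,v_1,v_2)\mapsto(-\varphi_1,v_1,v_2)$.

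The key observation is a parity bookkeeping argument on the coefficient functions. The quantities $A_{11}$, and the combinations $A_{12}/A_{22}$, $A_{21}/A_{11}$, $A_{22}-A_{12}A_{21}/A_{11}$, $A_{11}-A_{12}A_{21}/A_{22}$ all depend on $\varphi_1$ only through $\cos\varphi_1$, hence are even in $\varphi_1$; they are of course independent of $v_1,v_2$. Turning to the forcing terms: with $k_2=0$, $r_1 = -k_1\varphi_1 - (m_1 l_1 l_2 - m_3 l_2 l_3)v_2^2\sin\varphi_1$ is odd in $\varphi_1$ (both $\varphi_1$ and $\sin\varphi_1$ are odd, and the $v_2^2$ factor is even and untouched by the sign flip on $\varphi_1$); and $r_2 = -Fl_1\sin\varphi_1 + (m_1 l_1 l_2 - m_3 l_2 l_3)v_1(v_1+2v_2)\sin\varphi_1$ is likewise odd in $\varphi_1$, since $\sin\varphi_1$ flips sign while the velocity factor $v_1(v_1+2v_2)$ is unchanged. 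Therefore $r_1 - (A_{12}/A_{22})r_2$ and $r_2 - (A_{21}/A_{11})r_1$ are (odd)$\times$(even) $=$ odd in $\varphi_1$, and the same is true after dividing by the even factors $(A_{11}-A_{12}A_{21}/A_{22})^{-1}$ and $(A_{22}-A_{12}A_{21}/A_{11})^{-1}$. Hence each right-hand side $f_i(\varphi_1,v_1,v_2)$ of the $\dot v_i$ equation satisfies $f_i(-\varphi_1,v_1,v_2) = -f_i(\varphi_1,v_1,v_2)$.

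Putting this together: $\dot{\tilde v}_i(t) = -\dot v_i(-t) = -f_i(\varphi_1(-t),v_1(-t),v_2(-t)) = -f_i(-\tilde\varphi_1(t),\tilde v_1(t),\tilde v_2(t)) = f_i(\tilde\varphi_1(t),\tilde v_1(t),\tilde v_2(t))$, which is exactly the $\dot v_i$ equation evaluated on the tilded functions. Together with $\dot{\tilde\varphi}_1 = \tilde v_1$ this shows $(\tilde\varphi_1,\tilde v_1,\tilde v_2)$ solves \eqref{eq4}, completing the proof.

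There is no real obstacle here; the only thing requiring a modicum of care is the parity bookkeeping — confirming that every coefficient entering the reduced vector field is even in $\varphi_1$ while the whole forcing part is odd, and in particular that the reciprocals $A_{22}-A_{12}A_{21}/A_{11}$ etc. are nonvanishing so the division is legitimate (this positivity is already invoked in the proof of Proposition~1). One should also note explicitly why the sign flip must be on $\varphi_1$ and not on the velocities: it is precisely because $r_1,r_2$ are odd functions of $\varphi_1$ but even in $(v_1,v_2)$, so this particular combination of reversals is the one that is compatible with the equations. I would present the argument as the short computation above, with the parity table stated once and then applied.
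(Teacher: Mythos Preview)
Your argument is correct and follows essentially the same route as the paper: the paper's proof is simply the one-line observation that the $A_{ij}$ are even in $\varphi_1$ while $r_1,r_2$ are odd, so substitution of $-\varphi_1(-t)$, $v_1(-t)$, $v_2(-t)$ reproduces the equations. Your version is a more explicit unpacking of the same parity-plus-time-reversal computation.
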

\begin{proof}

The proof follows from the substitution of given functions into the system. Terms $A_{11}$, $A_{12}$, $A_{22}$ do not change after the substitution $\varphi_1 \mapsto -\varphi_1$. Terms $r_1$ and $r_2$ change their signs. At the same time, the left-hand side also changes its sign.
\end{proof}
From the lemma we obtain the following
\begin{proposition}
Let $\varphi_1(t)$, $v_1(t)$, $v_2(t)$  be a solution  of \eqref{eq4}which intersects the plane $\varphi_1 = 0$ in two points. Then this solution is periodic and intersects the plane $\varphi_1 = 0$ transversely.
\end{proposition}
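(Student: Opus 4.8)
The plan is to exploit the time‑reversal symmetry supplied by the Lemma. Consider the involution $R\colon(\varphi_1,v_1,v_2)\mapsto(-\varphi_1,v_1,v_2)$ of the phase space of system \eqref{eq4}; its set of fixed points is precisely the plane $\{\varphi_1=0\}$, and the Lemma states exactly that $t\mapsto R(x(-t))$ is a solution whenever $x(t)=(\varphi_1(t),v_1(t),v_2(t))$ is. Thus \eqref{eq4} is a reversible system with reversing involution $R$, and for such systems any orbit that meets the fixed‑point set of $R$ at two distinct times must close up. The proposition is exactly this general fact in the case when the fixed‑point set is the plane $\varphi_1=0$.

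Concretely, let $x(t)$ be the given solution, defined on its maximal interval $I$, and let $t_1\neq t_2$ be the two instants in $I$ at which $\varphi_1$ vanishes; put $p_i=x(t_i)$, so that $R(p_i)=p_i$. First I would show that $x$ is symmetric under reflection of time about $t_1$. By the Lemma together with the autonomy of \eqref{eq4} (any time translate of a solution is a solution), the curve $t\mapsto R(x(2t_1-t))$ is again a solution of \eqref{eq4}; it takes the value $R(x(t_1))=R(p_1)=p_1=x(t_1)$ at $t=t_1$, so by uniqueness of solutions it coincides with $x$. Hence $x(2t_1-t)=R(x(t))$ for all $t\in I$ (and, incidentally, $I$ is symmetric about $t_1$). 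The identical argument carried out at $t_2$ gives $x(2t_2-t)=R(x(t))$.

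Comparing the two relations and using $R^2=\mathrm{id}$, one obtains $x(2t_1-t)=x(2t_2-t)$ for every $t$, i.e. $x(s)=x(s+2(t_2-t_1))$ with $2(t_2-t_1)\neq 0$. Since the maximal interval $I$ is then invariant under a nonzero time translation, it must be all of $\mathbb{R}$, and the solution is periodic with period dividing $2(t_2-t_1)$.

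I do not expect a genuine obstacle in this argument; the only points requiring a little care are that the two intersection points correspond to two \emph{distinct} times, so that the translation produced is nonzero, and that the reflection symmetry should be deduced through uniqueness of solutions of the ODE rather than through a time‑$t$ flow map, which a priori need not be complete — after which completeness of the periodic orbit is automatic. Transversality of the intersections, exploited elsewhere in the paper, plays no role in this statement.
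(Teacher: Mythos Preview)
Your argument is correct and is precisely the approach the paper has in mind; the paper's own proof is a one-sentence sketch (``a periodic solution can be obtained from the segment between the two points using the symmetry with respect to the plane $\varphi_1=0$''), and you have simply written out the standard reversible-systems argument that makes this sketch rigorous, via the reflection $x(2t_i-t)=R(x(t))$ at each intersection time and uniqueness of solutions.
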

\begin{proof}
A periodic solution can be obtained from the segment between the two points using symmetry with respect to the plane $\varphi_1 = 0$.
\end{proof}
Finally, we obtain the following three results on the existence of families of periodic solutions for system \eqref{eq4}.
\begin{proposition}
Let $k_2 = 0$. There exists $K_1$ such that for any $k_1 > K_1$ there exists $\varepsilon = \varepsilon(k_1)$ and for all $|F| < \varepsilon$ system \eqref{eq4} has a two-parameter family of periodic solutions.
\end{proposition}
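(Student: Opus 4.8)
The plan is to treat this statement as a perturbation, in the parameter $F$, of the situation at $F=0$, and to certify periodicity of the perturbed solutions by the reversing symmetry of the Lemma rather than by any integrability argument. Let $K_1$ be the threshold produced in the proposition about the case $F=0$ (the one requiring $k_1$ large), so that for every $k_1>K_1$ the system \eqref{eq4} with $F=0$ has a two-parameter family of periodic solutions crossing the plane $\Sigma:=\{\varphi_1=0\}$ transversely. Fix $k_1>K_1$, choose one solution $\gamma_0$ from this family, and let $p_0$ be the point of $\gamma_0\cap\Sigma$ at which $v_1>0$. Since $\gamma_0$ is a small oscillation about the point $\varphi_1=v_1=0$ of the reduced picture, it meets $\Sigma$ in exactly the two points $p_0$ and $q_0$, the latter having $v_1<0$; let $\tau_0>0$ be the time the orbit takes to go from $p_0$ to $q_0$.

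First I would use smooth (indeed analytic) dependence of the solutions of \eqref{eq4} on the initial data and on $F$. Writing $x(t;p,F)$ for the solution with $x(0;p,F)=p$ and $\Phi(t,p,F)$ for its $\varphi_1$-component, one has $\Phi(\tau_0,p_0,0)=0$ and $\partial_t\Phi(\tau_0,p_0,0)=v_1\big|_{q_0}\neq 0$, the crossing being transverse. The implicit function theorem then yields a neighborhood $W\subset\Sigma$ of $p_0$, a number $\varepsilon=\varepsilon(k_1)>0$, and a smooth function $\tau(p,F)$ on $W\times(-\varepsilon,\varepsilon)$ with $\tau(p_0,0)=\tau_0$ and $\Phi(\tau(p,F),p,F)=0$. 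Shrinking $W$ and $\varepsilon$ we keep $\tau(p,F)>0$, keep this crossing transverse, keep its $v_1$-component negative while every $p\in W$ has $v_1$-component positive, and keep the whole orbit $C^1$-close to $\gamma_0$. Consequently, for $|F|<\varepsilon$ and $p\in W$ the orbit through $p$ meets $\Sigma$ in the two distinct points $p$ and $x(\tau(p,F);p,F)$.

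By the Proposition stating that any solution which intersects $\varphi_1=0$ in two points is periodic — whose proof is exactly the gluing of the orbit segment with its image under the reversing symmetry of the Lemma — each such orbit is a periodic solution of \eqref{eq4}. Since $W$ is an open subset of the two-dimensional manifold $\Sigma$ and, being $C^1$-close to $\gamma_0$, each of these orbits meets $\Sigma$ only at $p$ and at $x(\tau(p,F);p,F)$, the latter lying near $q_0$ and hence outside $W$, the assignment $p\mapsto\mathrm{orbit}(p)$ is injective on $W$; thus we obtain an honest two-parameter family of periodic solutions for all $|F|<\varepsilon(k_1)$, as claimed. The constant $K_1$ and the fact that $\varepsilon$ may degenerate as $k_1\downarrow K_1$ are inherited verbatim from the $F=0$ proposition, where largeness of $k_1$ was needed to make the relevant Hessian positive definite.

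The only substantive point — and the step I expect to need the most care — is the persistence of the \emph{return} to $\Sigma$: a priori a perturbed orbit might leave $\Sigma$ and never come back. Here this causes no trouble precisely because in the unperturbed system the return occurs at a \emph{finite} time $\tau_0$ and \emph{transversely}, so it suffices to control the flow on the compact interval $[0,\tau_0+1]$, where dependence on $F$ is uniform; in particular no small-divisor or KAM-type argument enters, since ``meeting $\Sigma$ twice, transversely'' is an open condition rather than a resonance condition. One should also check, as a routine matter, that the orbit of $p\in W$ does not re-meet $\Sigma$ before time $\tau(p,F)$ — immediate from $C^1$-closeness to $\gamma_0$ — so that $\tau(p,F)$ is genuinely the relevant time for the symmetric gluing.
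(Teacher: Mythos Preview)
Your proposal is correct and follows essentially the same route as the paper: use Proposition~1 to obtain, for $k_1>K_1$ and $F=0$, periodic solutions crossing $\{\varphi_1=0\}$ transversely, then invoke openness of the two-transverse-crossings condition (you make this explicit via the implicit function theorem) together with Proposition~4 to conclude periodicity for small $|F|$. The paper's own proof is the one-line remark after Propositions~5--7; your version is simply a more careful unpacking of that remark, including the injectivity check that the family is genuinely two-dimensional.
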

\begin{proof}
 For $F = 0$ there exist $K_1$ such that for any $k_1 > K_1$ there exists a two-parameter family of periodic solutions of \eqref{eq4}. Moreover, each of these solutions intersects the plane $\varphi_1 = 0$ transversely. Therefore, if $|F|$ is small, then the right-hand side of \eqref{eq4} is close to the corresponding right hand side for $F = 0$ and the system still has a two-parameter family of periodic solutions. 
\end{proof}

The proofs of the following two propositions are generally the same and are based on the existence of families of solutions which intersect the plane $\varphi_1 = 0$ transversely for $m_1 l_1 - m_3 l_3 = 0$. From Proposition 8 we find that these families of solutions exist for small values of $|m_1 l_1 - m_3 l_3|$.

\begin{proposition}
Let $k_1 > 0$ and $k_2 = 0$, then there exists $\varepsilon = \varepsilon(k_1)$ such that for all $|m_1 l_1 - m_3 l_3| < \varepsilon$ system \eqref{eq4} has a two-parameter family of periodic solutions. 
\end{proposition}
\begin{proposition}
Let $k_1 = 0$, $k_2 = 0$ and $F < 0$, then there exists $\varepsilon = \varepsilon(F)$ such that for all $|m_1 l_1 - m_3 l_3| < \varepsilon$ system \eqref{eq4} has a two-parameter family of periodic solutions. 
\end{proposition}

{Let us now discuss the integrability of system \eqref{eq4}. First, let us construct --- to a large degree explicitly --- the first integrals for this system. We will define these first integrals in a neighborhood of a periodic trajectory which intersects the plane $\varphi_1 = 0$.}

{Let $(v_1^0, v_2^0)$ be a local coordinates on the plane $\varphi_1 = 0$ in a neighborhood of the point of intersection of some periodic solution of \eqref{eq4} with the plane considered. For any point in a open neighborhood (in $\mathbb{R}^3$) of the periodic trajectory we define two functions $F_1(\varphi_1, v_1, v_2)$ and $F_2(\varphi_1, v_1, v_2)$ as follows. We put $F_1(\varphi_1, v_1, v_2) = v_1^0$ and $F_2(\varphi_1, v_1, v_2) = v_2^0$ if for some $t$ we have $v_1(t; 0, v_1^0, v_2^0) = v_1$, $ v_2(t; 0, v_1^0, v_2^0) = v_2$, $\varphi_1(t; 0, v_1^0, v_2^0) = \varphi_1$, where $\varphi_1(t;\varphi_1^0,v_1^0,v_2^0)$, $v_i(t;\varphi_1^0,v_1^0,v_2^0)$, $i=1,2$ are the components of solution of \eqref{eq4} with initial conditions $(\varphi_1^0,v_1^0,v_2^0)$.}

{In other words, the values of the first integrals $F_1$, $F_2$ in a neighborhood of a periodic trajectory are defined by the coordinates $v_1^0$ and $v_2^0$ of the point of intersection of the corresponding trajectory with the plane $\varphi_1 = 0$.}

{Since the flow of \eqref{eq4} defines a $C^\infty$-smooth diffeomorphism, it is not hard to show that $F_i$ are $C^\infty$-smooth functions. Two $1$-forms $dF_1$ and $dF_2$ are linearly independent in a neighborhood of the periodic solution considered: since $(v_1^0, v_2^0)$ are local coordinates on the plane $\varphi_1 = 0$, then the $dF_1$ and $dF_2$ are independent (by construction) at the point of intersection of the periodic trajectory with the plane $\varphi_1 = 0$, the flow of our system defines a diffeomorphism and this diffeomorphism preserves the linear independence of the $1$-forms (or corresponding vectors) along the trajectory.}

{Any function of $F_1$ and $F_2$ is also a first integral and we can construct infinitely many first integrals (of course, these integrals are not independent of $F_1$, $F_2$). For instance, we can construct $C^k$-smooth first integrals for any $k \geqslant 0$ as a $C^k$-smooth function of $F_1$ and $F_2$.} \textcolor{black}{Since the right-hand side of the considered system is analytic in a neighborhood of the periodic solution, then it is also possible to construct real analytic first integrals $F_1$ and $F_2$.} 

\begin{proposition}
{ Let $k_2 = 0$. There exists $K_1$ such that for any $k_1 > K_1$ there exists $\varepsilon = \varepsilon(k_1)$ and for all $|F| < \varepsilon$ system \eqref{eq4} is integrable: there are two functionally independent smooth first integrals. System \eqref{eq5} is integrable in the sense of the Jacobi theorem and is integrable \textit{in a broad sense} }. 
\end{proposition}

\begin{proof}
{We have already proved the existence of an invariant measure, therefore the systems are integrable in the first two senses. The integrability \textit{in a broad sense} follows from the existence of two first integrals and two commuting vector fields in $\mathbb{R}^4$: the vector field of system \eqref{eq5} and $\frac{\partial}{\partial \varphi_2}$. The constructed first integrals $F_1$ and $F_2$ do not depend on $\varphi_2$ and are also first integrals for the vector field $\frac{\partial}{\partial \varphi_2}$. }
\end{proof}

Similarly, we obtain the following proposition.
\begin{proposition}
Let $k_2 = 0$. Let $k_1 > 0$ or $k_1 = 0$ and $F < 0$, then there exists $\varepsilon = \varepsilon(F)$ such that for all $|m_1 l_1 - m_3 l_3| < \varepsilon$ systems \eqref{eq5} and \eqref{eq4} are integrable.
\end{proposition}

Here the integrability is understood in the same sense as in Proposition 12.

\begin{figure}
  \centering
  \includegraphics[width=\linewidth]{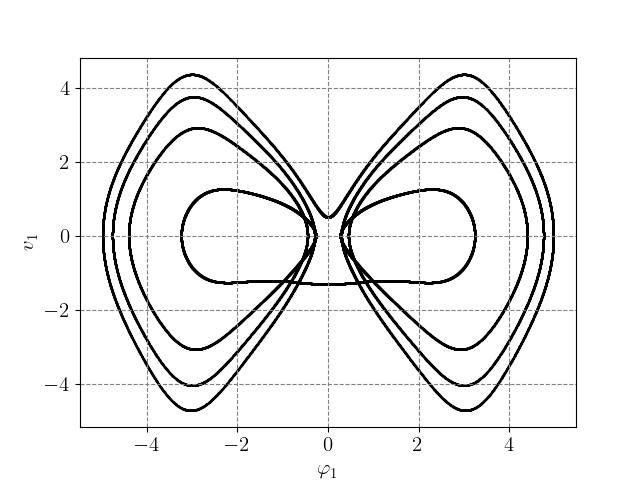}
  \caption{{An example of a periodic solution of system \eqref{eq4} for $F \ne 0$ and $m_1 l_1 \ne m_3 l_3$. Projection onto the $(\varphi_1, v_1)$-plane. Here $F=6$, $m_1 = 1$, $m_2 = 2$, $l_1 = 1$, $l_2 = 4$, $m_3 = 2$, $l_3 = 2.3$, $k_1 = 3$, $\varphi_1(0) = 0$, $v_1(0) = 0.5$, $v_2(0) = 2.25$.}}
  \label{fig:sfig2}
\end{figure}%

\begin{figure}
  \centering
  \includegraphics[width=\linewidth]{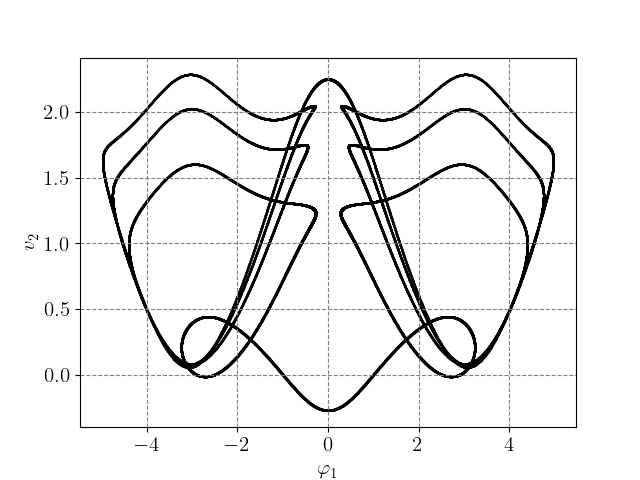}
  \caption{{An example of a periodic solution of system \eqref{eq4} for $F \ne 0$ and $m_1 l_1 \ne m_3 l_3$. Projection onto the $(\varphi_1, v_2)$-plane. Here $F=6$, $m_1 = 1$, $m_2 = 2$, $l_1 = 1$, $l_2 = 4$, $m_3 = 2$, $l_3 = 2.3$, $k_1 = 3$, $\varphi_1(0) = 0$, $v_1(0) = 0.5$, $v_2(0) = 2.25$.}}
  \label{fig:sfig2}
\end{figure}%

The natural question of whether or not it is possible to lift the two first integrals $F_1$ and $F_2$ globally to the whole three-dimensional space with coordinates $\varphi_1, v_1, v_2$ is expected to have a negative answer. It follows from the numerical results which show that in a general case for some initial conditions there exist non-periodic trajectories lying on a two-dimensional surface. Therefore, there is numerical evidence that there are less than two independent first integrals. Let us however note that the periodic solutions which intersect the plane $\varphi_1 = 0$ can be relatively complex compared to that presented in Figs.~2 and 3. An example of such a solution is shown in Figs.~4 and 5.

The mechanism for the onset of non-periodic trajectories is as follows (see Fig.~6). Suppose that for $F = 0$ we have a family of periodic solutions, if $|F|$ is sufficiently small, then all these solutions still intersect the plane $\varphi_1 = 0$ at two points. Therefore, we obtain a family of periodic solutions. When $F$ becomes sufficiently large, some solutions of this family do not intersect the plane $\varphi_1 = 0$ and may become non-periodic. This non-periodic trajectories lie on a two-dimensional surface in $\mathbb{R}^3$. Therefore, we have no more than one first integral of the system. This breaks the integrability (or super-integrability) of the system. Figure~6  shows a family of solutions starting at the same point. The only parameter of the system that varies from solution to solution is the magnitude of force $F$. For small $F$ all solutions intersect the plane $\varphi_1 = 0$, but a non-periodic solution appears when $F$ becomes larger than some critical value. This solution does not intersect the plane considered.

Note that the above conditions for the existence of periodic solutions for system \eqref{eq4} (for $F = 0$) and \eqref{eq7} can be weakened and the families of periodic solutions are much wider. We present simple conditions that can be obtained without cumbersome calculations and detailed estimates.

\begin{figure}[h!]
  \centering
  \includegraphics[width=\linewidth]{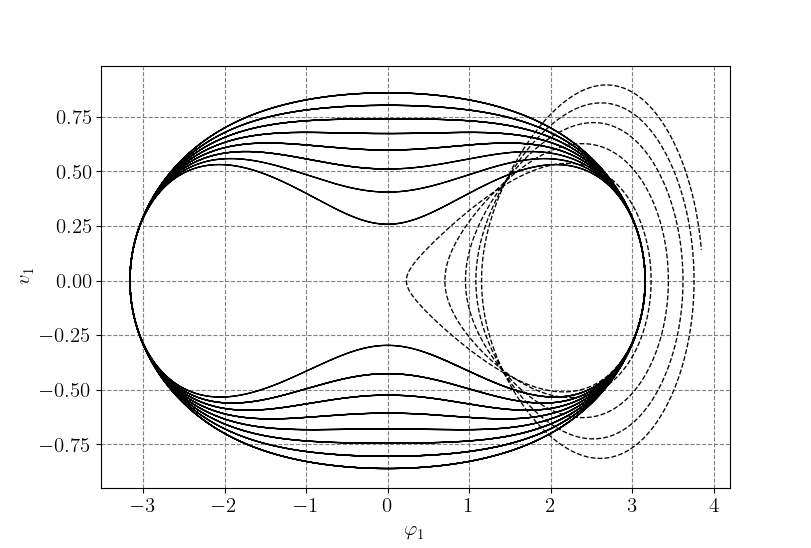}
\caption{The mechanism for the onset of a non-periodic solution. The trajectory of the non-periodic solution is dotted ($F=0.8$). Here $F=0.1j$, $j=0,\dots,8$, $m_1 = 1$, $m_2 = 1$, $l_1 = 3$, $l_2 = 1$, $m_3 = 1$, $l_3 = 4$, $k_1 = 0.275$, $\varphi_1(0) = \pi$, $v_1(0) = 0.1$, $v_2(0) = 0$.}
\end{figure}

For the integrability in a four-dimensional space with an invariant measure, the most natural definition of the integrability stems from the Jacobi theorem: the system is integrable if it has two independent first integrals.

For any periodic solution in the reduced space with coordinates $\varphi_1,v_1, v_2$, from the independent equation
$$
\dot \varphi_2 = v_2,
$$
it follows that in the four-dimensional space $\varphi_1,\varphi_2,v_1, v_2$ this solution lies on a two-dimensional surface (the angular variables are considered modulo $2\pi$). If the solution is not periodic in the reduced space $\varphi_1,v_1, v_2$, then its projection from the four-dimensional space onto a three-dimensional space with, for instance, coordinates $\varphi_1, \varphi_2, v_1$ can densely fill three-dimensional domains. 

In the former case, when we have periodic solutions, we have two first integrals. In the latter case, when solutions in $\mathbb{R}^3$ are not periodic, there may be only one or no first integrals. As an illustration (Figs.~7 and 8) we present the sections formed by the intersection of projections of two groups of solutions onto the coordinate space  $\varphi_1, \varphi_2, v_1$ with the plane $\varphi_2 = \pi/2$: in the first case the solutions are periodic in the space with coordinates $\varphi_1, v_1, v_2$, in the second case the corresponding solution is not periodic in this space. The parameters of the system are the same in the both cases, yet the initial conditions vary.

\begin{figure}
  \centering
  \includegraphics[width=\linewidth]{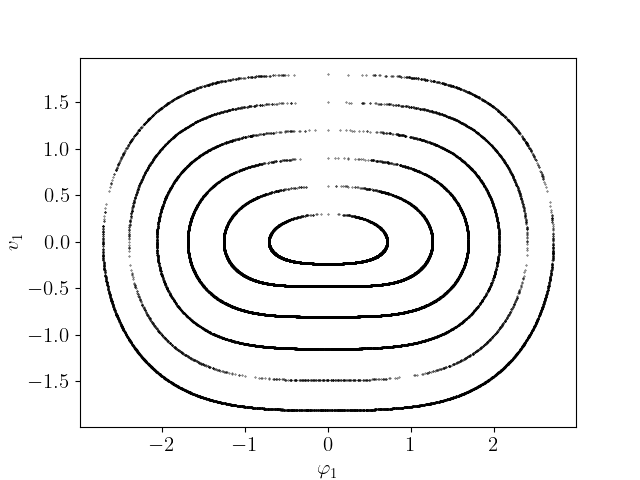}
  \caption{Regular behavior of solutions of system \eqref{eq5}. The sections of six independent solutions are presented, each solution lies on an invariant torus. Here $F=2$, $m_1 = 1$, $m_2 = 1$, $l_1 = 1$, $l_2 = 1$, $m_3 = 1.1$, $l_3 = 1$, $k_1 = 1$, $\varphi_1(0) = 0$, $\varphi_2(0) = 0$, $v_1(0) = 0.3j$, $j=0,\dots, 6$, $v_2(0) = 0$.}
  \label{fig:sfig2}
\end{figure}%

\begin{figure}
  \centering
  \includegraphics[width=\linewidth]{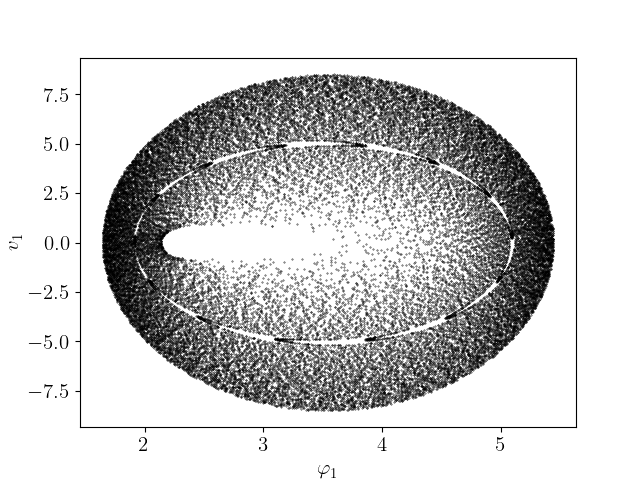}
  \caption{Irregular behavior of one solution of system \eqref{eq5}. The section of a single solution which does not belong to an invariant two-dimensional torus is shown. Here $F=2$, $m_1 = 1$, $m_2 = 1$, $l_1 = 1$, $l_2 = 1$, $m_3 = 1.1$, $l_3 = 1$, $k_1 = 1$, $\varphi_1(0) = \pi$, $\varphi_2(0) = 0$, $v_1(0) = 0.2$, $v_2(0) = -10$.}
  \label{fig:sfig1}
\end{figure}

In the conclusion of the section, we consider a sufficient generalization of system \eqref{eq4} which is also integrable. Indeed, the key property which is used in the proves of integrability is the symmetry of the original system. Hence, if we consider perturbations relatively small and satisfying the properties of symmetry, {then we again obtain a locally integrable system.} For instance, let us consider the following system (here we assume that $k_2 = 0$) 
\begin{align}
\label{eq8}
\begin{split}
& \dot \varphi_1 = v_1,\\
& \dot v_1 = \left( r_1 - \frac{A_{12}}{A_{22}} r_2 \right)\left( A_{11} - \frac{A_{12} A_{21}}{A_{22}} \right)^{-1} + f_1,\\
& \dot v_2 = \left( r_2 - \frac{A_{21}}{A_{11}} r_1 \right)\left( A_{22} - \frac{A_{12} A_{21}}{A_{11}} \right)^{-1} + f_2,
\end{split}
\end{align}
where $f_i = f_i(\varphi_1, v_1, v_2)$ and $f_i(-\varphi_1, v_1, v_2) = -f_i(\varphi_1, v_1, v_2)$, $i=1,2$ are sufficiently small. Then this system is integrable in the above senses. For instance, a family of periodic solutions exists for system \eqref{eq8} if we put $f_1 = -\alpha \varphi_1 \sin(v_2)$, $f_2 = \alpha \varphi_1 \sin(v_1)$, for sufficiently small values of $\alpha$. At the same time, there may exist asymptotically stable equilibria in this system. Therefore, there is no smooth invariant measure in this system and there cannot exist two first integrals which are defined globally in the whole phase space.

\section{Case $k_2 \ne 0$}

In the above considerations we assume that the stiffness of the spring located at the fixed point $O$ equals zero. This allows us to study the reduced system with the three-dimensional phase space $\mathbb{S} \times \mathbb{R}^2$.

When $k_2 \ne 0$ we have to consider the system defined on $\mathbb{T}^2 \times \mathbb{R}^2$, and this system has an invariant measure. Based on the results from KAM theory for the reversible systems (for details, see \cite{sevryuk2006reversible}), one can expect the coexistence of regions with regular and chaotic behavior of the solutions. 

A detailed numerical study of this case is beyond the scope of this paper. We only present some results showing that for the Ziegler pendulum the regular dynamics can coexist with the irregular behavior of trajectories.

After rescaling time by
$$
t \mapsto t \cdot (A_{11}A_{22} - A_{12}A_{21}) 
$$
system \eqref{eq4} can be represented in the following form (here we also have the equation for $\dot \varphi_2$)
\begin{align}
\label{eq9}
\begin{split}
& \dot \varphi_1 = v_1 (A_{11}A_{22} - A_{12}A_{21}),\\
& \dot \varphi_2 = v_2 (A_{11}A_{22} - A_{12}A_{21}),\\
& \dot v_1 = A_{22} r_1 - A_{12} r_2,\\
& \dot v_2 = A_{11}r_2 - A_{12} r_1,
\end{split}
\end{align}
where we have used the same notations as above.

One of the most popular methods of numerical study of chaos is $mLCE$ (a simple method based on the estimation of the maximum Lyapunov characteristic exponent). A detailed explanation of this approach can be found in \cite{benettin1980lyapunov,lichtenberg2013regular,skokos2010lyapunov}.

For a given solution, the Lyapunov characteristic exponents can be calculated as follows:
$$
\chi(X(t), x) = \limsup\limits_{t \to + \infty} \frac{1}{t} \ln \frac{\|X(t) x(0)\|}{\|x(0)\|},
$$
where $X(t)$ is the fundamental matrix of the linearized system for system \eqref{eq9}. The system is linearized along the  solution considered. The evolution of vector $x(t)$ is described by the following linear system:
$$
\dot x = A(t) x.
$$
For our system, we have

\begin{align*}
A(t) = 
\left(\begin{matrix}
v_1A_{11} \displaystyle{\frac{\partial A_{22}}{\partial \varphi_1}} - 2 v_1  A_{12} \frac{\partial A_{12}}{\partial \varphi_1}\\
v_2 A_{11} \displaystyle{\frac{\partial A_{22}}{\partial \varphi_1}} - 2 v_2 A_{12} \frac{\partial A_{12}}{\partial \varphi_1} \\
\displaystyle{ \frac{\partial A_{22}}{\partial \varphi_1} r_1 + A_{22} \frac{\partial r_1}{\partial \varphi_1} - \frac{\partial A_{12}}{\partial \varphi_1} r_2 - A_{12} \frac{\partial r_2}{\partial \varphi_1}} \\
\displaystyle{ \frac{\partial A_{22}}{\partial \varphi_1} r_1 + A_{22} \frac{\partial r_1}{\partial \varphi_1} - \frac{\partial A_{12}}{\partial \varphi_1} r_2 - A_{12} \frac{\partial r_2}{\partial \varphi_1}} 
\end{matrix}\right.
\end{align*}

\begin{align*}
\left.
\begin{matrix}
0 & A_{11}A_{22} - A_{12}A_{21} & 0\\
0 & 0 & A_{11}A_{22} - A_{12}A_{21}\\
- A_{12} \displaystyle{\frac{\partial r_2}{\partial \varphi_2}} & A_{22} \displaystyle{\frac{\partial r_1}{\partial v_1}} - A_{12} \displaystyle{\frac{\partial r_2}{\partial v_1}} & A_{22} \displaystyle{\frac{\partial r_1}{\partial v_1}} - A_{12} \displaystyle{\frac{\partial r_2}{\partial v_1}}\\
A_{11} \displaystyle{\frac{\partial r_2}{\partial \varphi_2}} & A_{22} \displaystyle{\frac{\partial r_1}{\partial v_1}} - A_{12} \displaystyle{\frac{\partial r_2}{\partial v_1}} & A_{22} \displaystyle{\frac{\partial r_1}{\partial v_1}} - A_{12} \displaystyle{\frac{\partial r_2}{\partial v_1}}
\end{matrix}\right)
\end{align*}

In practice, the following equation is used to obtain the maximum Lyapunov characteristic exponent
$$
\chi(X(t)) = \lim\limits_{t \to + \infty} \frac{1}{t} \ln \frac{\|X(t) x(0)\|}{\|x(0)\|}.
$$
It is known that for almost all initial vectors $x(0)$ and for a wide range of matrices $X(t)$ this equation gives the maximum Lyapunov characteristic exponent.

Let us consider a system with the following values of the mechanical parameters:
\begin{equation*}
  \begin{split}
    &m_1 = 1,\\
    &m_2 = 1,\\
    &m_3 = 3/2,
  \end{split}
\quad
  \begin{split}
    &l_1 = 1,\\
    &l_2 = 1,\\
    &l_3 = 1,
  \end{split}
 \quad
  \begin{split}
    &k_1 = 1,\\
    &k_2 = 1,\\
    &F = 2,
  \end{split} 
\end{equation*}
and two solutions with initial conditions $(\pi, 0, 1/10, 1/10)$ and $(\pi, 0, 20, 20)$ (the order of the coordinates is as follows: $\varphi_1, \varphi_2, v_1, v_2$). It was found numerically that $mLCE = 0$ for the first trajectory and $mLCE > 0$ (a property of a chaotic trajectory) for the second one. 

Regularity and irregularity of solutions can also be seen in the `Poincar{\'e} section' of a family of solutions, from which it can also be seen that regular dynamics coexists with chaotic dynamics (Fig. 9). We consider ten initial conditions $(\pi, 0, 1/10, (n + 1)/10)$, $0 \leqslant n < 10$, $n \in \mathbb{Z}$. For each corresponding solution we consider its projection onto the three-dimensional space $(\varphi_1, v_1, v_2)$, and then the section formed by the intersection of this projection with the plane $v_1 = 0$.

\begin{figure*}
  \centering
  \includegraphics[width=1.0\linewidth]{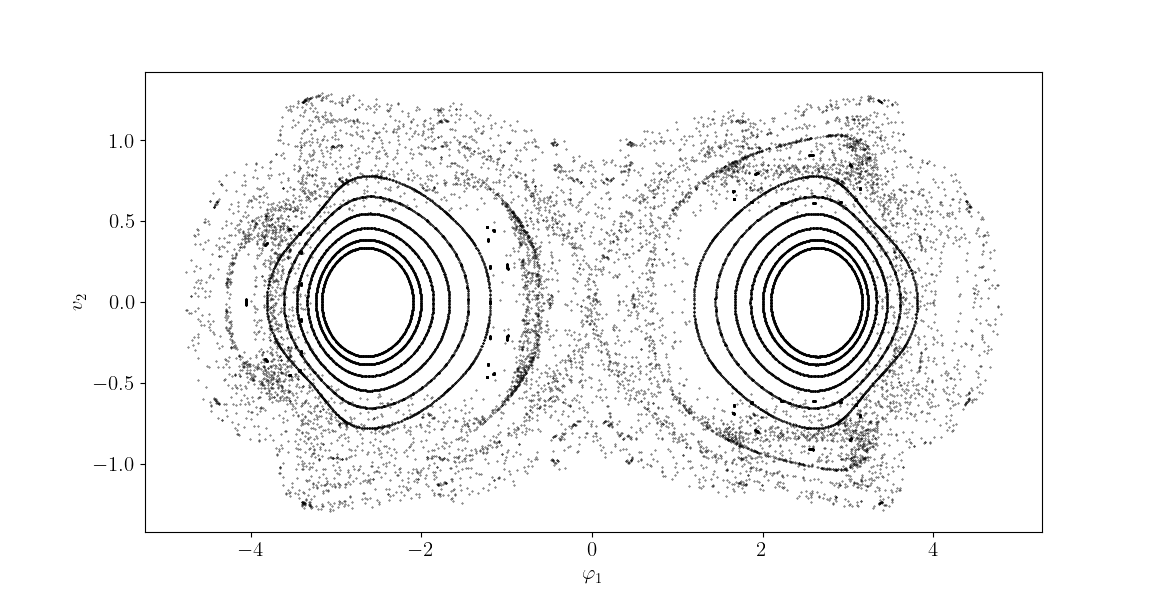}
\caption{Regular and chaotic solutions.}
\end{figure*}

The resulting sets of points are not the Poincar\'e section in the usual sense since we consider the section of the projections of solutions. In particular, it is possible that there exists more than one solution which intersects the plane $v_1 = 0$ at a given point. The resulting sections are shown in Fig.~9. It can be seen that for the relatively small initial velocity the solutions are regular (lie on a two-dimensional surface) and become chaotic as the velocity becomes larger.

\section{Conclusion}

In this paper, the dynamics of a planar double pendulum in the presence of a follower force has been studied. The system has been considered in the case {in which} the governing system of equations can be reduced to a system in a three-dimensional space, and also in the general case in which the system has been considered in a four-dimensional space. The following results have been obtained:
\begin{enumerate}
\item It is proved that, in the case of zero stiffness of the spring located at the point of suspension, there exist two-parametric families of periodic solutions {for some values of the mechanical parameters. In particular, this means the integrability of the system. } 
\item It is shown numerically that the periodic solutions do not fill the whole phase space. In particular, some solutions lie on two-dimensional surfaces and the system does not have two independent first integrals defined globally in the space with coordinates $\varphi_1, v_1, v_2$. Moreover, the system is not integrable \textcolor{black}{in a broad sense} in the whole phase space if we consider the dynamics in the original (unreduced) phase space $\mathbb{T}^2 \times \mathbb{R}^2$.
\item It is shown {numerically } that, in the general case of the Ziegler pendulum in which $k_2 \ne 0$, regular and chaotic dynamics coexist in the phase space.
\end{enumerate}

Let us briefly discuss the above results and outline some possible directions of further research. First, we note that we infer integrability of the system without explicitly presenting the required smooth functions of the first integrals. We only prove that there exist periodic solutions for the system and, therefore, the system can be integrable locally in some region of the phase space. {However, this local integrability should not be confused with the local existence of first integrals which is guaranteed by the flow-box theorem: in contrast to the flow-box theorem, we prove the existence of topologically non-trivial trajectories of the systems considered, the periodic solutions, and these solutions fully cover the region where the system is integrable. } Moreover, there is strong evidence that these local first integrals cannot be lifted to the whole phase space. {Note that some similar effects have been discovered recently in the dynamics of the so-called omnidisk \cite{kilin2023dynamics}.

An important property of the circulatory system \eqref{eq5}, which cannot be seen in non-integrable Hamiltonian systems, is that there are regions in the phase space of system \eqref{eq5} such that \textit{all} invariant tori are not destroyed when we add a perturbation to the system. For Hamiltonian systems it is typical that resonant tori are destroyed by perturbations. Since resonant tori are dense in the phase space, we can usually conclude that the system becomes non-integrable.

At the same time, the coexistence of the regular and the chaotic behavior of solutions is typical for Hamiltonian systems (see, for instance, \cite{arnold2007mathematical,kozlov2012symmetries}). Nevertheless, we have shown that trajectories with irregular behavior in circulatory systems (the Ziegler pendulum for $k_2 = 0$) may appear in a different manner comparing to Hamiltonian systems. The existence of chaotic regions in analytic Hamiltonian systems means that the system is not integrable, but the existence of a \textit{local} set of independent first integrals is still possible if we have less regular behavior of solutions in a circulatory system.

\section*{Acknowledgments}

The author is grateful to Valery Kozlov for his invaluable advice and useful suggestions.  

\bibliographystyle{unsrt}
\bibliography{sn-article}

\section*{Declarations}

The work was supported by the Russian Science Foundation (Project No. 19-71-30012). The author have no relevant financial or non-financial interests to disclose. The manuscript has no associated data.

\end{document}